\documentclass[a4paper]{amsart}
\usepackage{amsthm}
\usepackage{amssymb}
\usepackage{enumerate}
\usepackage{verbatim}
\usepackage{tikz-cd} 
\usepackage{hyperref}


\numberwithin{equation}{section}

\theoremstyle{plain}

\newtheorem{lemma}{Lemma}[section]
\newtheorem{theorem}[lemma]{Theorem}

\newtheorem{corollary}[lemma]{Corollary}

\theoremstyle{definition}

\theoremstyle{remark} 
\newtheorem{remark}[lemma]{Remark}


\hyphenation{Grothen-dieck} 
\hyphenation{com-mu-ta-tive}


\newcommand{\Adloc}{\operatorname{\mathsf{Adloc}}}

\renewcommand{\dim}{\operatorname{dim}}
\newcommand{\End}{\operatorname{End}}
\newcommand{\Ext}{\operatorname{Ext}}

\newcommand{\Hom}{\operatorname{Hom}}

\renewcommand{\mod}{\operatorname{\mathsf{mod}}}

\newcommand{\Proj}{\operatorname{Proj}}

\newcommand{\res}{\operatorname{res}}

\newcommand{\supp}{\operatorname{supp}}

\newcommand{\tExt}{\operatorname{\rlap{$\smash{\widehat{\mathrm{Ext}}}$}\phantom{\mathrm{Ext}}}}
\newcommand{\Thick}{\operatorname{\mathsf{Thick}}}



\def\mcU{\mathcal{U}}

\def\sfC{\mathsf C}

\def\bbN{\mathbb N}

\def\bbZ{\mathbb Z}

\newcommand{\fp}{\mathfrak{p}}
\newcommand{\fq}{\mathfrak{q}}

\title{The variety of subadditive functions for finite group schemes}

\author[Benson and Krause]{Dave Benson and Henning Krause}

\address{Dave Benson \\ 
Institute of Mathematics\\ 
University of Aberdeen\\ 
King's College\\ 
Aberdeen AB24 3UE\\ 
Scotland U.K.}

\address{Henning Krause\\ 
Fakult\"at f\"ur Mathematik\\ 
Universit\"at Bielefeld\\ 
33501 Bielefeld\\ 
Germany.}

\begin{document}

\begin{abstract} 
  For a finite group scheme, the subadditive functions on finite
  dimensional representations are studied. It is shown that the
  projective variety of the cohomology ring can be recovered from the
  equivalence classes of subadditive functions. Using Crawley-Boevey's
  correspondence between subadditive functions and endofinite modules,
  we obtain an equivalence relation on the set of point modules
  introduced in our joint work with Iyengar and Pevtsova. This
  corresponds to the equivalence relation on $\pi$-points introduced
  by Friedlander and Pevtsova.
\end{abstract}

\keywords{subadditive function, endofinite module, stable module category, finite group scheme}
\subjclass[2010]{16G10 (primary); 20C20, 20G10, 20J06 (secondary)}

\date{April 5, 2016}

\maketitle

\section{Introduction}

A theorem of Crawley-Boevey \cite{CB1992} gives a correspondence
between endofinite modules for a ring $A$ and subadditive functions on
the finitely presented $A$-modules.

We examine this correspondence in the context of finite group schemes,
for certain endofinite `point modules' for a finite group scheme $G$
introduced in our joint work with Iyengar and Pevtsova
\cite{Benson/Iyengar/Krause/Pevtsova:2015c}. These point modules come
from the $\pi$-points introduced by Friedlander and Pevtsova
\cite{Friedlander/Pevtsova:2007a}.  There is a natural equivalence
relation on $\pi$-points, and it is proved in
\cite{Friedlander/Pevtsova:2007a} that the equivalence classes of
$\pi$-points can be used to reconstruct the variety $\Proj H^*(G,k)$.

We translate the equivalence relation of Friedlander and Pevtsova into
a corresponding equivalence relation on subadditive functions.  This
enables us to prove that one can recover $\Proj H^*(G,k)$ from the
subadditive functions on finite dimensional $G$-modules in a natural
way (Theorem~\ref{th:main}).

\section{Subadditive functions and endofinite modules}

We briefly review Crawley-Boevey's correspondence between subadditive 
functions and endofinite modules. 

We fix a ring $A$ and consider the category of (right) $A$-modules.
Let $\mod A$ denote the full subcategory of finitely presented
$A$-modules. For an $A$-module $M$ let $\ell_A(M)$ denote its
composition length.

A \emph{subadditive function} $\chi\colon\mod A\to\bbN$ assigns to
each finitely presented $A$-module a non-negative integer such that
\begin{enumerate}[\quad\rm(1)]
\item $\chi(X\oplus Y)=\chi(X)+\chi(Y)$ for all $X,Y\in\mod A$, and 
\item $\chi(X)+\chi(Z)\ge\chi(Y)$ for each exact sequence $X\to Y\to
  Z\to 0$ in $\mod A$.
\end{enumerate}
A subadditive function $\chi\neq 0$ is \emph{irreducible} if $\chi$
cannot be written as a sum of two non-zero subadditive functions.

An $A$-module $M$ is called \emph{endofinite} if it has finite
composition length when viewed as a left module over its endomorphism
ring $\End_A(M)$.  An endofinite $A$-module $M$ gives rise to a
subadditive function $\chi_M$ by setting
\[\chi_M(X):=\ell_{\End_A(M)}(\Hom_A(X,M))\quad\text{for} \quad X\in\mod A.\]

The following theorem of Crawley-Boevey \cite[\S 5]{CB1992} provides the
context for our study of subadditive functions.

\begin{theorem}\label{th:CB}
  Any subadditive function $\mod A\to \bbN$ can be written uniquely as
  a finite sum of irreducible subadditive functions. Sending an
  endofinite $A$-module $M$ to $\chi_M$ induces a bijection between
  the isomorphism classes of indecomposable endofinite $A$-modules and
  the irreducible subadditive functions $\mod A\to \bbN$.\qed
\end{theorem}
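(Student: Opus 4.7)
The plan is to prove the theorem in four linked steps:
(a) check that $\chi_M$ is subadditive when $M$ is endofinite;
(b) establish a Krull--Schmidt decomposition for endofinite modules, yielding $\chi_M=\sum_i n_i\chi_{M_i}$ for the indecomposable summands;
(c) realize every subadditive function as $\chi_M$ for some endofinite $M$, unique up to isomorphism; and
(d) deduce the bijection and uniqueness statements.

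For (a), apply $\Hom_A(-,M)$ to an exact sequence $X\to Y\to Z\to 0$ in $\mod A$ to obtain the left exact sequence of left $\End_A(M)$-modules
\[
0\lra \Hom_A(Z,M)\lra \Hom_A(Y,M)\lra \Hom_A(X,M).
\]
Each term has finite length over $\End_A(M)$, because for a finitely generated $A$-module $N$ there is an embedding $\Hom_A(N,M)\hookrightarrow M^n$ of finite $\End_A(M)$-length. Additivity of length then gives $\chi_M(Y)\le\chi_M(X)+\chi_M(Z)$, and compatibility with direct sums is immediate. For (b), any endofinite module is $\Sigma$-pure-injective with a semiperfect endomorphism ring, producing a decomposition $M=\bigoplus_i M_i^{n_i}$ into finitely many pairwise non-isomorphic indecomposable endofinite summands with finite multiplicities, unique up to reordering; additivity of $\Hom_A(X,-)$ in the second variable then yields $\chi_M=\sum_i n_i\chi_{M_i}$.

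The principal obstacle is step (c). The strategy is to pass from $\mod A$ to the free abelian category built on $\mod A$, realized as the category $\mathcal{A}$ of finitely presented additive functors $\mod A\to\mathrm{Ab}$. A subadditive function extends uniquely to a length function on $\mathcal{A}$: each $F\in\mathcal{A}$ has a presentation $\Hom_A(Y,-)\to\Hom_A(X,-)\to F\to 0$, the subadditivity of $\chi$ on $\mod A$ forces a well-defined value of $\chi(F)$, and additivity on short exact sequences in $\mathcal{A}$ corresponds precisely to the original subadditivity. General length-function theory on skeletally small abelian categories then decomposes this canonically into a finite sum of irreducible length functions, each concentrated on a single isomorphism class of simple object. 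Under the Auslander-type duality identifying simple finitely presented functors on $\mod A$ with indecomposable pure-injective $A$-modules, those corresponding to length functions that are finite on every representable are precisely the indecomposable endofinite ones; reassembling the irreducible summands produces an endofinite $M$, unique up to isomorphism, with $\chi_M=\chi$.

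Step (d) is then formal: step (c) establishes a bijection between isomorphism classes of endofinite modules and subadditive functions, and under this bijection the direct-sum decomposition $M=\bigoplus_i M_i^{n_i}$ of (b) corresponds to the decomposition $\chi=\sum_i n_i\chi_{M_i}$. Hence $\chi$ is irreducible iff the associated $M$ is indecomposable, giving the claimed bijection between indecomposable endofinite modules and irreducible subadditive functions, and the Krull--Schmidt uniqueness of (b) translates into the uniqueness of the decomposition of an arbitrary subadditive function into irreducibles.
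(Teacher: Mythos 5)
First, note that the paper does not prove this statement at all: it is quoted from Crawley-Boevey \cite[\S 5]{CB1992} and stamped with a \emph{qed}, so there is no internal argument to compare with; your functor-category route is indeed the general direction of Crawley-Boevey's own proof. But your write-up contains concrete errors. The multiplicity bookkeeping in (b) and (d) is wrong: since $\End_A(M^{\oplus n})\cong M_n(\End_A(M))$ and length is Morita invariant, one has $\chi_{M\oplus M}=\chi_M$ (the paper records exactly this in the remark following the theorem), so for $M=\bigoplus_i M_i^{(I_i)}$ the correct formula is $\chi_M=\sum_i\chi_{M_i}$ over the \emph{distinct} indecomposable summands, not $\sum_i n_i\chi_{M_i}$; moreover the multiplicities in such a decomposition may be infinite. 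Consequently your step (c) and the ``formal'' step (d) are false as stated: $M\mapsto\chi_M$ is neither injective on isomorphism classes of endofinite modules ($M$ and $M\oplus M$ give the same function) nor surjective onto subadditive functions. For instance, over $A=k$ a field the function $2\dim_k$ is subadditive but is not $\chi_N$ for any $N$, because every nonzero vector space is simple over its endomorphism ring, so $\chi_N(k)=1$ always. The theorem only claims a bijection between indecomposable endofinite modules and irreducible functions, and a proof that routes through a bijection between \emph{all} endofinite modules and \emph{all} subadditive functions cannot succeed.

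Independently of this, the heart of the matter --- your step (c) --- is asserted rather than proved, and the two structural facts you invoke are misstated. The extension of $\chi$ to a length function on the category of finitely presented functors is not ``forced'' by reading a value off one presentation: an additive extension must also assign values to kernels, and well-definedness across presentations is precisely the technical content one has to establish. Next, irreducible $\bbN$-valued length functions on a skeletally small abelian category need \emph{not} be concentrated at a single simple object: on finitely generated abelian groups the rank function is additive, irreducible, and vanishes on every simple object. Finally, simple finitely presented functors are not in bijection with indecomposable pure-injective modules; the Gruson--Jensen/Auslander correspondence matches indecomposable pure-injectives with indecomposable \emph{injective} functors, and only some of these are injective envelopes of simples. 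So both the unique decomposition into irreducibles and the identification of irreducibles with indecomposable endofinite modules remain unproved in your sketch. Crawley-Boevey's actual argument characterizes endofiniteness by a finiteness condition on the associated object of the functor category and then carries out this analysis carefully; as the paper does, the statement is best cited, or else these steps must be supplied in full.
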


Note that every endofinite module decomposes uniquely into
indecomposable endofinite modules. We have
$\chi_{M\oplus M'}=\chi_M+\chi_{M'}$ when $M$ and $M'$ have no common
indecomposable summand, while $\chi_{M\oplus M}=\chi_M$ \cite[\S
5]{CB1992}.

\section{The additive locus of a subadditive function}

We fix a ring $A$ such that $\mod A$ is an abelian category. For a
subadditive function $\chi\colon\mod A\to\bbN$ we define
the \emph{additive locus} $\Adloc(\chi)$ as the full subcategory of
objects $Z\in\mod A$ such that for every exact sequence
$0\to X\to Y\to Z\to 0$ in $\mod A$
\[\chi(X) -\chi(Y) + \chi(Z)=0.\]

We collect some basic properties of the additive locus.

\begin{lemma}\label{le:adloc}
  Let $M$ be an endofinite $A$-module and $Z\in\mod A$. Then $Z$ belongs to
  $\Adloc(\chi_M)$ if and only if $\Ext_A^1(Z,M)=0$.
\end{lemma}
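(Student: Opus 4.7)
The plan is to prove both directions by applying $\Hom_A(-,M)$ to an exact sequence and measuring lengths over $\End_A(M)$. First I would record the basic observation that $\chi_M(X) = \ell_{\End_A(M)}(\Hom_A(X,M))$ is finite for every $X \in \mod A$: a finite presentation $A^m \to A^n \to X \to 0$ gives an injection $\Hom_A(X,M) \hookrightarrow M^n$ into a module of finite length over $\End_A(M)$ (since $M$ itself has finite length there). Thus lengths are additive on short exact sequences of $\End_A(M)$-submodules arising from Hom-sequences.

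For the ``if'' direction, assume $\Ext^1_A(Z,M)=0$ and take any exact sequence $0 \to X \to Y \to Z \to 0$ in $\mod A$. Applying $\Hom_A(-,M)$ yields
\[
0 \to \Hom_A(Z,M) \to \Hom_A(Y,M) \to \Hom_A(X,M) \to \Ext^1_A(Z,M) = 0,
\]
and additivity of $\ell_{\End_A(M)}$ on this short exact sequence gives $\chi_M(X) - \chi_M(Y) + \chi_M(Z) = 0$, so $Z \in \Adloc(\chi_M)$.

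For the ``only if'' direction, suppose $Z \in \Adloc(\chi_M)$. Since $\mod A$ is abelian, choose a surjection $A^n \twoheadrightarrow Z$ (such $A^n$ lies in $\mod A$ and is projective) and let $X$ be its kernel, which is finitely presented by assumption on $\mod A$. Applying $\Hom_A(-,M)$ and using projectivity of $A^n$ gives the \emph{four}-term exact sequence
\[
0 \to \Hom_A(Z,M) \to \Hom_A(A^n,M) \to \Hom_A(X,M) \to \Ext^1_A(Z,M) \to 0.
\]
The first three terms have finite length over $\End_A(M)$, hence so does the last. Additivity of length now yields
\[
\ell_{\End_A(M)}(\Ext^1_A(Z,M)) = \chi_M(X) - \chi_M(A^n) + \chi_M(Z),
\]
and the right-hand side vanishes by the hypothesis $Z \in \Adloc(\chi_M)$. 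Hence $\Ext^1_A(Z,M) = 0$.

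The only real subtlety is ensuring that $\Ext^1_A(Z,M)$ actually has finite length over $\End_A(M)$, which I plan to deduce \emph{a posteriori} from the four-term sequence above; once this is in hand, the argument reduces to additivity of composition length, which is entirely formal. I expect no further obstacles.
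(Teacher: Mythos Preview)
Your proposal is correct and follows essentially the same approach as the paper: apply $\Hom_A(-,M)$ to a short exact sequence, use additivity of length for the ``if'' direction, and for the converse choose the middle term to be a finitely generated projective so that the long exact sequence truncates to a four-term sequence terminating in $\Ext^1_A(Z,M)$. The paper's proof is terser (it simply says ``the converse follows by choosing for $Y$ a projective $A$-module''), but your added detail about finiteness of the relevant lengths and the explicit four-term sequence is exactly what underlies that sentence.
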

\begin{proof}
  Applying
  $\Hom_A(-,M)$ to an exact sequence $0\to X\to Y\to Z\to 0$ in
  $\mod A$ induces a long exact sequence
  \[0\to\Hom_A(Z,M)\to \Hom_A(Y,M)\to \Hom_A(X,M)\to \Ext^1_A(Z,M)\to
  \cdots\]
  of $\End_A(M)$-modules. Clearly, $\Ext_A^1(Z,M)=0$
  implies \[\chi_M(X) -\chi_M(Y) + \chi_M(Z)=0.\]
  The converse follows by choosing for $Y$ a projective $A$-module.
\end{proof}

\begin{lemma}\label{le:adloc-sum}
Let $\chi,\chi'\colon\mod A\to\bbN$ be subadditive functions. Then
\[\Adloc(\chi+\chi')=\Adloc(\chi)\cap\Adloc(\chi').\]
\end{lemma}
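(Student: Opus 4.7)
The plan is to unravel the definitions and observe that everything is controlled by a non-negativity argument. For any subadditive function $\chi$ and any short exact sequence $0\to X\to Y\to Z\to 0$ in $\mod A$, the defining inequality $\chi(X)+\chi(Z)\ge\chi(Y)$ gives
\[\chi(X)-\chi(Y)+\chi(Z)\ge 0,\]
so the expression in the definition of $\Adloc(\chi)$ is always a non-negative integer.

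First I would verify (essentially by inspection) that $\chi+\chi'$ is again a subadditive function, so that $\Adloc(\chi+\chi')$ makes sense. Then, for any short exact sequence $0\to X\to Y\to Z\to 0$, I would write
\[(\chi+\chi')(X)-(\chi+\chi')(Y)+(\chi+\chi')(Z)=\bigl[\chi(X)-\chi(Y)+\chi(Z)\bigr]+\bigl[\chi'(X)-\chi'(Y)+\chi'(Z)\bigr],\]
which exhibits the left-hand side as a sum of two non-negative integers. Hence it vanishes if and only if each of the two bracketed terms vanishes.

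Quantifying over all short exact sequences ending in $Z$: the object $Z$ lies in $\Adloc(\chi+\chi')$ iff the displayed sum is zero for every such sequence, iff for every such sequence both brackets are zero, iff $Z\in\Adloc(\chi)$ and $Z\in\Adloc(\chi')$. This yields the asserted equality.

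There is no real obstacle; the only point to be careful about is that one needs the inequality $\chi(X)-\chi(Y)+\chi(Z)\ge 0$ to hold for every short exact sequence (not merely non-negativity of each $\chi$-value), which is exactly the content of axiom (2) of a subadditive function applied to $0\to X\to Y\to Z\to 0$.
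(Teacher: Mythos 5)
Your argument is correct and is essentially identical to the paper's proof: both reduce to the observation that $\chi(X)-\chi(Y)+\chi(Z)$ is a non-negative integer for every short exact sequence (by axiom (2)), so a sum of two such quantities vanishes if and only if each does. Nothing further is needed.
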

\begin{proof}
  Let $\eta\colon 0\to X\to Y\to Z\to 0$ be an exact sequence in
  $\mod A$. Then $\chi(\eta):=\chi(X) -\chi(Y) + \chi(Z)$ is a
  non-negative integer. Thus $(\chi+\chi')(\eta)=0$ if and only if
  $\chi(\eta)=0=\chi'(\eta)$. From this observation the assertion of the
  lemma follows.
\end{proof}

\begin{lemma}\label{le:adloc-mod}
Let $\chi\colon\mod A\to\bbN$ be a subadditive function. Then there
exists an endofinite $A$-module $M$ such that $\Adloc(\chi)
=\Adloc(\chi_M)$.
\end{lemma}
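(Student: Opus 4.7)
My plan is to reduce the statement to Crawley-Boevey's theorem and the additivity property of $\Adloc$ on sums, established in Lemma~\ref{le:adloc-sum}.

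First I would invoke Theorem~\ref{th:CB} to decompose the given $\chi$ as a finite sum of irreducible subadditive functions. Collecting repetitions, write
\[
\chi = n_1 \chi_1 + n_2\chi_2 + \cdots + n_k \chi_k
\]
where the $\chi_i$ are pairwise distinct irreducible subadditive functions and $n_i \geq 1$. By Theorem~\ref{th:CB}, each $\chi_i$ equals $\chi_{M_i}$ for an indecomposable endofinite $A$-module $M_i$, and the $M_i$ are pairwise non-isomorphic.

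Next I would observe that $\Adloc(n\chi') = \Adloc(\chi')$ for any subadditive function $\chi'$ and any integer $n\geq 1$: indeed, for any short exact sequence $\eta$ one has $(n\chi')(\eta) = n\cdot\chi'(\eta)$, and the latter vanishes precisely when $\chi'(\eta) = 0$. Combining this with Lemma~\ref{le:adloc-sum} (applied inductively), one obtains
\[
\Adloc(\chi) \;=\; \bigcap_{i=1}^{k} \Adloc(n_i\chi_i) \;=\; \bigcap_{i=1}^{k} \Adloc(\chi_{M_i}) \;=\; \Adloc\Bigl(\sum_{i=1}^k \chi_{M_i}\Bigr).
\]

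Finally, since the $M_i$ are pairwise non-isomorphic indecomposables, the remark following Theorem~\ref{th:CB} gives $\chi_{M_1 \oplus \cdots \oplus M_k} = \chi_{M_1} + \cdots + \chi_{M_k}$. Setting $M := M_1 \oplus \cdots \oplus M_k$, which is again endofinite, we conclude $\Adloc(\chi) = \Adloc(\chi_M)$. There is no serious obstacle: the only subtle point is that $\chi$ itself need not be realized as $\chi_N$ for any $N$ (because multiplicities on the subadditive side are not detected by $\chi_N$), but the additive locus only sees the \emph{set} of irreducible summands, which is exactly what allows us to strip the multiplicities.
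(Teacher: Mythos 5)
Your proof is correct and follows essentially the same route as the paper: decompose $\chi$ via Theorem~\ref{th:CB} into distinct irreducibles $\chi_{M_i}$, use Lemma~\ref{le:adloc-sum} to strip multiplicities and pass to the sum, and take $M=\bigoplus_i M_i$. You merely spell out the step $\Adloc(n\chi')=\Adloc(\chi')$ that the paper leaves implicit.
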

\begin{proof}
  We apply Theorem~\ref{th:CB}. Write $\chi=\sum_in_i\chi_i$ with
  $\chi_i$ irreducible and $\chi_i\neq\chi_j$ for all $i\neq j$. There
  are indecomposable endofinite $A$-modules $M_i$ such that
  $\chi_i=\chi_{M_i}$ for all $i$, and $\sum_i\chi_i=\chi_M$ for
  $M=\bigoplus_iM_i$. Then $\Adloc(\chi)=\Adloc(\chi_M)$ by
  Lemma~\ref{le:adloc-sum}.
\end{proof}

Let $\chi,\chi'\colon\mod A\to\bbN$ be subadditive functions. We set
\[\chi\ge\chi'\quad :\,\Longleftrightarrow\quad \Adloc(\chi)\subseteq
\Adloc(\chi')\] and call $\chi$ and $\chi'$ \emph{equivalent} if
$\Adloc(\chi)=\Adloc(\chi')$. Thus the equivalence classes of
subadditive functions form a partially ordered set.

\section{Subadditive functions for finite group schemes}

Let $G$ be a finite group scheme over a field $k$. Thus $G$ is an
affine group scheme such that its coordinate algebra $k[G]$ is finite
dimensional as a $k$-vector space. The $k$-linear dual of $k[G]$ is a
cocommutative Hopf algebra, called the group algebra of $G$, and
denoted $kG$. We identify $G$-modules with modules over the group
algebra $kG$. The category of finite dimensional $G$-modules is
denoted by $\mod G$.  

The tensor product over $k$ induces one for $\mod G$ via the diagonal
action of $G$. A subcategory $\sfC$ of $\mod G$ is called \emph{tensor
  closed}, if $M\in\sfC$ and $N\in\mod G$ imply that $M\otimes_k N$ is
in $\sfC$.

We write $H^*(G,k)$ for the cohomology algebra of $G$ and
$\Proj H^*(G,k)$ for the set of its homogeneous prime ideals not
containing the unique maximal ideal of positive degree elements. Note
that $H^*(G,k)$ acts on $\Ext^*_G(M,N)$ for all $G$-modules $M,N$.
The \emph{support} of a finite dimensional $G$-module $M$ is
\[\supp_G(M):=\{\fp\in\Proj H^*(G,k)\mid \Ext_G^*(M,M)_\fp\neq 0\}.\]

The following theorem says that $\Proj H^*(G,k)$ can be recovered from
the equivalence classes of subadditive functions on $\mod G$.

We call a subadditive function on $\mod G$ \emph{tensor
  closed} if its additive locus is a tensor closed subcategory of $\mod G$.

An element $x$ of a poset is \emph{join irreducible} if it is not the
supremum of elements that are strictly smaller than $x$.

\begin{theorem}\label{th:main}
  Let $G$ be a finite group scheme over a field $k$ and let $P(G,k)$
  denote the partially ordered set of equivalence classes of tensor
  closed subadditive functions $\chi\colon\mod G\to\bbN$. If the class
  of $\chi$ is join irreducible, then there exists a unique
  $\fp\in\Proj H^*(G,k)$ such that
\[\Adloc(\chi)=\{M\in\mod G\mid \fp\not\in\supp_G(M)\}.\]
Sending $\chi$ to $\fp$ induces an order isomorphism between the set
of join irreducible elements of $P(G,k)$ and $\Proj H^*(G,k)$.
\end{theorem}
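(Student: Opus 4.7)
The plan is to reduce to an assertion about endofinite $G$-modules via Theorem~\ref{th:CB}, then to identify the tensor closed join irreducible equivalence classes with the point modules of \cite{Benson/Iyengar/Krause/Pevtsova:2015c}, which are indexed by $\Proj H^*(G,k)$.

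By Lemma~\ref{le:adloc-mod} every class in $P(G,k)$ is represented by some $\chi_M$ with $M$ endofinite, and Lemma~\ref{le:adloc} identifies its additive locus as $\{Z \in \mod G : \Ext^1_G(Z,M) = 0\}$. Decomposing $M = \bigoplus_i M_i$ into indecomposable endofinite summands and applying Lemma~\ref{le:adloc-sum} we obtain $\Adloc(\chi_M) = \bigcap_i \Adloc(\chi_{M_i})$; the same lemma shows that $\chi + \chi'$ represents the join $[\chi] \vee [\chi']$ in $P(G,k)$. So if $[\chi_M]$ is join irreducible within the tensor closed classes, one of the $M_i$ already has $\Adloc(\chi_{M_i}) = \Adloc(\chi_M)$, and we may assume $M$ is an indecomposable endofinite module whose additive locus is tensor closed.

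The key geometric input is from \cite{Benson/Iyengar/Krause/Pevtsova:2015c}: for every $\fp \in \Proj H^*(G,k)$ there is an indecomposable endofinite point module $C_\fp$, attached to a $\pi$-point representing $\fp$ in the sense of \cite{Friedlander/Pevtsova:2007a}, with the property that, for $N \in \mod G$,
\[
\Ext^1_G(N, C_\fp) = 0 \quad\Longleftrightarrow\quad \fp \notin \supp_G(N).
\]
Combined with Lemma~\ref{le:adloc} this yields
\[
\Adloc(\chi_{C_\fp}) = \{N \in \mod G : \fp \notin \supp_G(N)\},
\]
which is tensor closed (because $\fp \notin \supp_G(N)$ forces $\fp \notin \supp_G(N \otimes_k N')$ for every $N' \in \mod G$) and represents a join irreducible class in $P(G,k)$ (using indecomposability of $C_\fp$ together with Theorem~\ref{th:CB}). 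The prime $\fp$ is recovered from the class $[\chi_{C_\fp}]$ as the unique point of $\Proj H^*(G,k)$ that lies in $\supp_G(N)$ for some $N \notin \Adloc(\chi_{C_\fp})$ but in no $N \in \Adloc(\chi_{C_\fp})$, giving uniqueness.

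The main obstacle is the converse direction: given an indecomposable endofinite $M$ with tensor closed additive locus and $[\chi_M]$ join irreducible in $P(G,k)$, one must produce a prime $\fp$ with $\Adloc(\chi_M) = \Adloc(\chi_{C_\fp})$. The strategy is to extract $\fp$ from $M$ using the support theory of \cite{Benson/Iyengar/Krause/Pevtsova:2015c}---the tensor closed join irreducibility should force the $\pi$-support of $M$ to degenerate to a single point---and then to verify equality of the two additive loci using the Friedlander--Pevtsova equivalence of $\pi$-points. With this in hand, the order isomorphism follows from
\[
[\chi_{C_\fp}] \leq [\chi_{C_\fq}] \Longleftrightarrow \Adloc(\chi_{C_\fq}) \subseteq \Adloc(\chi_{C_\fp}) \Longleftrightarrow \fp \subseteq \fq,
\]
the last equivalence using that the supports $\supp_G(N)$ are specialization closed in $\Proj H^*(G,k)$.
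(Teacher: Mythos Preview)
Your proposal has a genuine gap: the converse direction is not proved. You yourself flag ``the main obstacle'' and then offer only a heuristic (``the tensor closed join irreducibility should force the $\pi$-support of $M$ to degenerate to a single point''), with no argument. The paper handles this cleanly and in a way you miss entirely: by Corollary~\ref{co:adloc} the additive locus of any tensor closed subadditive function is a tensor closed thick subcategory, and Theorem~\ref{th:bikp} then identifies $P(G,k)$ with a subposet of the specialisation closed subsets of $\Proj H^*(G,k)$ (equivalently, closed sets in the Hochster dual topology). Once you know that this subposet contains every point closure $\overline{\{\fp\}}$ (via the $\pi$-point construction, Theorem~\ref{th:bikp-pipoint} and Lemma~\ref{le:pipoint}), the elementary topological Lemma~\ref{le:top} pins down the join irreducibles as exactly the point closures. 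No analysis of indecomposable endofinite modules is needed.

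Two further issues. First, your reduction to indecomposable $M$ is not quite right as stated: when you decompose $M=\bigoplus_i M_i$, the summands $\chi_{M_i}$ need not be tensor closed, so the $[\chi_{M_i}]$ are not a priori elements of $P(G,k)$, and join irreducibility in $P(G,k)$ does not let you pick one of them. (This can be repaired using Remark~\ref{re:tensor}, but you do not do so.) Second, your claim that $[\chi_{C_\fp}]$ is join irreducible ``using indecomposability of $C_\fp$ together with Theorem~\ref{th:CB}'' conflates two different notions: irreducibility of $\chi_{C_\fp}$ as a subadditive function (which follows from indecomposability) is not the same as join irreducibility of its equivalence class in the poset $P(G,k)$. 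The paper's route via Lemma~\ref{le:top} gives this for free, since the closure of a point in a $T_0$ space is always join irreducible among closed sets.
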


The proof will be given in \S\ref{se:proof}.

\section{The additive locus of a tensor closed function}

In this section we study the additive locus of a subadditive function
that is tensor closed.  We need the following result, which
essentially comes from \cite{BCR1990}.

For $G$-modules $M,N$ and $n\in\bbZ$, we write $\tExt^n_G(M,N)$ for
the $n$th Tate extension group; it equals $H^n(\Hom_G(\mathbf t M,N))$
where $\mathbf t M$ denotes a Tate resolution of $M$. Note that
\[\tExt^n_G(M,N)\cong\Ext^n_G(M,N) \quad\text{for}\quad n>0.\]

\begin{theorem}\label{th:BCR}
Given a finite group scheme $G$ over a field $k$, there exists a
positive integer $r$ such that for any $G$-modules $M$ and $N$,
if $\tExt^n_G(M,N)=0$ for $r$ consecutive values of $n$ then
$\tExt^n_G(M,N)=0$ for all $n$ positive and negative.
\end{theorem}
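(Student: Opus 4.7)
The plan is to transpose the argument of \cite{BCR1990} from modular group algebras to finite group schemes. Two features of the original proof extend without essential change: the Friedlander--Suslin theorem, which ensures that $H^*(G,k)$ is a finitely generated graded-commutative $k$-algebra, and the Hopf algebra structure on $kG$, which makes $\StMod G$ into a tensor triangulated category.

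I would first fix homogeneous generators $\zeta_1,\dots,\zeta_s$ of $H^*(G,k)$ with $\deg\zeta_i=d_i$, and set $r$ to be a bound determined by these degrees (for instance $r:=\sum_i d_i$; any larger choice would also work). Each $\zeta_i$ is represented by a morphism $\hat\zeta_i\colon\Omega^{d_i}k\to k$ in $\StMod G$, and I would let $L_{\zeta_i}$ denote its fibre, fitting into a distinguished triangle
\[L_{\zeta_i}\longrightarrow\Omega^{d_i}k\xra{\hat\zeta_i}k\longrightarrow\Omega^{-1}L_{\zeta_i}.\]
Note that $L_{\zeta_i}$ is finite-dimensional. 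Tensoring this triangle with $N$ over $k$ and applying $\tExt^{*}_{G}(M,-)$ would produce long exact sequences whose connecting morphisms are cup product with $\zeta_i$ on $\tExt^{*}_{G}(M,N)$.

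The core would then be an iterative Koszul-type argument. Assuming $\tExt^n_G(M,N)=0$ for $r$ consecutive values of $n$, the long exact sequences force multiplication by each $\zeta_i$ to act isomorphically on $\tExt^{*}_{G}(M,N)$ in overlapping degree ranges. Since the $\zeta_i$ generate $H^*(G,k)$, iterating propagates the vanishing to arbitrarily large positive degrees. The analogous statement for negative degrees would follow from Tate duality, using the self-duality of $kG$ as a Frobenius algebra together with the isomorphism $\tExt^n_G(M,N)\cong\tExt^n_G(k,\sHom_k(M,N))$ that holds when $M$ is finite-dimensional.

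The hard part will be making the constant $r$ uniform in $M$ and $N$. For finite-dimensional modules this is automatic from finite generation of $\tExt^{*}_{G}(M,N)$ as an $H^*(G,k)$-module. In general, the key observation is that the Carlson modules $L_{\zeta_i}$ are themselves finite-dimensional, so the long exact sequences above remain valid for arbitrary $M$ and $N$, and the Koszul-type propagation rests only on the degrees $d_i$ of the generators, not on any finiteness assumption on $M$ or $N$.
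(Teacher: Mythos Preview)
Your plan is correct and matches the paper's approach: the paper's proof is simply a citation of \cite{BCR1990} together with the observation that the only additional ingredient needed for finite group schemes, namely finite generation of $H^*(G,k)$, is supplied by Friedlander--Suslin \cite{Friedlander/Suslin:1997a}. One small simplification: the separate appeal to Tate duality for negative degrees is unnecessary (and, as you note, would require $M$ finite-dimensional), since the Carlson-module Koszul argument of your final paragraph already propagates vanishing in both directions at once for arbitrary $M$ and $N$---the key point being that $L_{\zeta_1}\otimes_k\cdots\otimes_k L_{\zeta_s}$ is projective when the $\zeta_i$ generate $H^*(G,k)$.
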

\begin{proof}
The proof for finite groups given in \cite{BCR1990}
works just as well for finite group schemes. The input is the finite
generation of cohomology, which for finite group schemes was
proved by Friedlander and Suslin \cite{Friedlander/Suslin:1997a}.
\end{proof}

A full subcategory $\sfC$ of $\mod G$ is said to be \emph{thick}, if
any direct summand of a module in $\sfC$ is also in $\sfC$ and for
every exact sequence $0\to M'\to M\to M''\to 0$ in $\mod G$ with
two of $M, M', M''$ in $\sfC$ also the third is in $\sfC$.

\begin{corollary}\label{co:adloc}
  Let $\chi\colon\mod G\to\bbN$ be a tensor closed subadditive
  function. Then $\Adloc(\chi)$ is a thick subcategory of $\mod G$. If $\chi=\chi_M$
for some  endofinite $G$-module $M$, then we have
\[\Adloc(\chi_M)=\{X\in\mod G\mid \tExt^*_G(X,M)=0\}.\]
\end{corollary}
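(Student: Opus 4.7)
The plan is to first prove the Tate cohomology characterization in the second assertion and then deduce thickness as a corollary. By Lemma~\ref{le:adloc-mod}, any tensor closed $\chi$ satisfies $\Adloc(\chi)=\Adloc(\chi_M)$ for some endofinite $M$, so once the identification $\Adloc(\chi_M)=\{X\mid \tExt^*_G(X,M)=0\}$ is in hand, thickness of $\Adloc(\chi)$ follows in both the special case and the general one.

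One inclusion of the characterization is immediate: if $\tExt^*_G(X,M)=0$ then in particular $\Ext^1_G(X,M)=0$, whence $X\in\Adloc(\chi_M)$ by Lemma~\ref{le:adloc}. For the reverse inclusion, I would first record that $\Adloc(\chi_M)$ is closed under direct summands: given $X\oplus Y\in\Adloc(\chi_M)$ and a short exact sequence $0\to A\to B\to X\to 0$, enlarging to $0\to A\to B\oplus Y\to X\oplus Y\to 0$ and invoking additivity of $\chi_M$ forces $X\in\Adloc(\chi_M)$.

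The heart of the argument is upgrading $\Ext^1_G(X,M)=0$ to vanishing of all Tate groups, and this is where tensor closure enters. Since $kG$ is a Hopf algebra, tensoring a projective resolution of the trivial module $k$ with $X$ yields a projective resolution of $X$; comparing $j$th syzygies produces an isomorphism $\Omega^j(k)\otimes_k X\cong \Omega^j(X)\oplus P_j$ in $\mod G$ with $P_j$ projective. Tensor closure and summand closure of $\Adloc(\chi_M)$ then force $\Omega^j(X)\in\Adloc(\chi_M)$ for every $j\ge 0$, so Lemma~\ref{le:adloc} yields $\Ext^1_G(\Omega^j X,M)=0$, equivalently $\Ext^{j+1}_G(X,M)=0$, for all $j\ge 0$. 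Applying Theorem~\ref{th:BCR} to any $r$ consecutive positive indices then promotes this to $\tExt^n_G(X,M)=0$ for every $n\in\bbZ$.

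Thickness of $\Adloc(\chi)$ is then a direct consequence: closure under summands was established above, while the two-out-of-three property for short exact sequences in $\mod G$ falls out of the long exact sequence of Tate cohomology in the first variable, since vanishing of $\tExt^*_G(-,M)$ on two of three terms forces vanishing on the third. I expect the technically delicate step to be the Hopf-algebra bridge $\Omega^j(k)\otimes_k X\cong\Omega^j(X)\oplus P_j$, since it is precisely this identity which converts the tensor closure hypothesis into information about syzygies, and hence into the stretch of $\Ext$-vanishing that Theorem~\ref{th:BCR} needs as input.
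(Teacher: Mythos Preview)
Your proposal is correct and follows essentially the same route as the paper: reduce to $\chi=\chi_M$ via Lemma~\ref{le:adloc-mod}, use tensor closure together with $\Omega^j(k)\otimes_k X\cong\Omega^j(X)\oplus P_j$ to pass from $\Ext^1_G(X,M)=0$ to $\Ext^n_G(X,M)=0$ for all $n>0$, invoke Theorem~\ref{th:BCR} to obtain vanishing of all Tate groups, and read off thickness from the long exact Tate sequence. Your explicit verification of summand closure is a small addition the paper leaves implicit (and is in fact not strictly needed for the dimension-shift step, since $\Ext^1_G(\Omega^{j}(X)\oplus P_j,M)=\Ext^1_G(\Omega^{j}(X),M)$ already), but it does no harm and is required anyway for the thickness conclusion.
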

\begin{proof}
  We may assume that $\chi=\chi_M$ for some endofinite $G$-module $M$
  by Lemma~\ref{le:adloc-mod}. Fix $X\in\mod G$. From
  Lemma~\ref{le:adloc} it follows that $X$ is in $\Adloc(\chi)$ if and
  only if $\Ext_G^1(X,M)=0$. Let $\Omega(X)$ denote the kernel of a
  projective cover of $X$, and observe that
  $\Omega(X)\cong X\otimes_k\Omega(k)$ up to projective direct
  summands.  Using dimension shift and the fact that $\chi$ is tensor
  closed, it follows that $X$ is in $\Adloc(\chi)$ if and only if
  $\Ext_G^n(X,M)=0$ for every $n>0$.  Now Theorem~\ref{th:BCR} implies
  that $X$ is in $\Adloc(\chi)$ if and only if
  $\tExt_G^*(X,M)=0$. From this description of $\Adloc(\chi)$ and the
  long exact sequence for $\tExt_G^*(-,M)$ it follows that
  $\Adloc(\chi)$ is a thick subcategory.
\end{proof}

\begin{remark}\label{re:tensor}
Given any subadditive function $\chi\colon\mod G\to\bbN$, then
\[\chi':=\sum_{S\text{ simple}}\chi(-\otimes_k S)\] is up
to equivalence the unique minimal tensor closed subadditive function 
such that $\chi'\geq\chi$. In fact, Lemma~\ref{le:adloc-sum} shows
that \[\Adloc(\chi')=\{X\in\mod G\mid X\otimes_k Y\in\Adloc(\chi)\text{
  for all }Y\in\mod G\}.\]
\end{remark}

\section{Tensor closed thick subcategories  and $\pi$-points}

In this section we recall some of the results from our joint work with
Iyengar and Pevtsova
\cite{Benson/Iyengar/Krause/Pevtsova:2015b,Benson/Iyengar/Krause/Pevtsova:2015c}.
The first result is a classification of tensor closed thick
subcategories of $\mod G$ that has been anticipated in
\cite{Friedlander/Pevtsova:2007a}.

For a subcategory $\sfC$ of $\mod G$ we set 
\[\supp_G(\sfC):=\bigcup_{M\in\sfC}\supp_G(M).\]

A subset $\mcU$ of $\Proj H^*(G,k)$ is called \emph{specialisation
  closed} if whenever $\fp$ is in $\mcU$ so is any prime $\fq$
containing $\fp$. 
\begin{theorem}\label{th:bikp}
Let $G$ be a finite group scheme over a field $k$.
Then the assignments
\[\sfC\mapsto \supp_G(\sfC)\qquad\text{and}\qquad
\mcU\mapsto\{M\in\mod G\mid \supp_G(M)\subseteq\mcU\}\]
give mutually inverse and inclusion preserving bijections between the
non-zero tensor closed thick subcategories of $\mod G$ and the
specialisation closed subsets of $\Proj H^*(G,k)$.
\end{theorem}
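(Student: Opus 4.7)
The plan is to check that both assignments are well-defined and then verify that they are mutually inverse. Well-definedness of $\sfC \mapsto \supp_G(\sfC)$ reduces to showing that each $\supp_G(M)$ is specialisation closed in $\Proj H^*(G,k)$; this is immediate from the definition, since $\supp_G(M)$ is the set of homogeneous primes containing the annihilator of the graded $H^*(G,k)$-module $\Ext^*_G(M,M)$, so in fact $\supp_G(M)$ is even closed. For the reverse map one must check that $\sfC_\mcU:=\{M\in\mod G\mid \supp_G(M)\subseteq\mcU\}$ is a tensor closed thick subcategory; thickness follows from the five-lemma applied to localised $\Ext$-sequences, while tensor closure follows from the tensor product formula $\supp_G(M\otimes_k N)\subseteq \supp_G(M)\cap \supp_G(N)$, which is one of the basic outputs of the BIK+P machinery.

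The easier composition is $\mcU\mapsto\sfC_\mcU\mapsto\supp_G(\sfC_\mcU)$. The inclusion $\supp_G(\sfC_\mcU)\subseteq\mcU$ is immediate. For the converse, given $\fp\in\mcU$, one must exhibit a single $M\in\mod G$ with $\fp\in\supp_G(M)\subseteq\mcU$. Here I would invoke the Koszul-object construction from \cite{Benson/Iyengar/Krause/Pevtsova:2015b}: for each $\fp\in\Proj H^*(G,k)$ there exists a finite dimensional $G$-module $M$ with $\supp_G(M)=\overline{\{\fp\}}$, produced by killing successive homogeneous elements of $H^*(G,k)$ outside $\fp$. Since $\mcU$ is specialisation closed, $\overline{\{\fp\}}\subseteq\mcU$, and this module lies in $\sfC_\mcU$.

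The main obstacle, as usual for such classifications, is the other composition: given a non-zero tensor closed thick subcategory $\sfC$, I must show $\sfC_{\supp_G(\sfC)}\subseteq\sfC$, i.e., that any $M$ with $\supp_G(M)\subseteq\supp_G(\sfC)$ already lies in $\sfC$. This is the content of the stratification theorem for $\mod G$ proved in \cite{Benson/Iyengar/Krause/Pevtsova:2015c}. The strategy is to pass to the stable module category $\StMod G$, where the local cohomology functors $\gam_\fp$ stratify the ambient big tensor triangulated category by $\Proj H^*(G,k)$; one uses that if $M\notin\Thick^\otimes(\sfC)$ (the thick tensor ideal generated by $\sfC$ inside $\mod G$), then some $\gam_\fp M$ detects the discrepancy at a prime $\fp\notin\supp_G(\sfC)$, and the tensor product theorem then forces $\fp\in\supp_G(M)\setminus\supp_G(\sfC)$, a contradiction. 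With both compositions verified, the inclusion preservation and bijectivity follow formally.
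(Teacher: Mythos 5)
Your outline is essentially sound, but note that the paper does not prove this statement at all: its ``proof'' is a one-line citation of Theorem~10.3 of \cite{Benson/Iyengar/Krause/Pevtsova:2015b}, and everything of substance in your sketch (the subadditivity and tensor-product properties of $\supp_G$, the realisation of $\overline{\{\fp\}}$ as a support, and above all the stratification of $\StMod G$ by $\Proj H^*(G,k)$) is exactly the content of that reference. So you are not taking a different route; you are reconstructing, in outline, the proof that the paper delegates.

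Three points deserve correction or attention. First, the Koszul-object construction realises $\overline{\{\fp\}}$ as a support by killing a finite set of homogeneous \emph{generators of} $\fp$ (elements lying in $\fp$), since $\supp_G(\kos{k}{\zeta})=V(\zeta)$ and one intersects these closed sets to get $V(\fp)=\overline{\{\fp\}}$; your phrase ``elements of $H^*(G,k)$ outside $\fp$'' gets this backwards. Second, the stratification theorem you invoke for the hard inclusion is proved in \cite{Benson/Iyengar/Krause/Pevtsova:2015b} (the stratification paper), not in \cite{Benson/Iyengar/Krause/Pevtsova:2015c}, which concerns colocalising subcategories; moreover, passing from stratification of the big category $\StMod G$ to the statement about $\mod G$ requires the standard Neeman--Thomason-type argument that a compact object in the localising tensor ideal generated by a set of compacts lies in the thick tensor ideal they generate --- you gloss this step, and it is where the finite-dimensionality of $M$ actually enters. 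Third, your sketch never addresses the ``non-zero'' hypothesis: projective modules have empty support, so the empty specialisation-closed set corresponds to the subcategory of projectives, and one must observe that every non-zero tensor closed thick subcategory of $\mod G$ contains all projectives (if $M\neq 0$ lies in $\sfC$ then $M\otimes_k kG$ is a non-zero free module in $\sfC$), which is what makes the stated bijection well posed at the bottom of the poset. None of these is fatal, but as written the proposal is a sketch of the cited theorem rather than an independent argument, with the two slips above to repair.
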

\begin{proof}
 See Theorem~10.3 of \cite{Benson/Iyengar/Krause/Pevtsova:2015b}.
\end{proof}

In \cite{Friedlander/Pevtsova:2007a}, Friedlander and Pevtsova
introduced for a finite group scheme $G$ over a field $k$ of
characteristic $p>0$ the notion of a \emph{$\pi$-point}. This is by
definition a flat algebra homomorphism $\alpha\colon K[t]/(t^p)\to KG$
for some field extension $K/k$ such that $\alpha$ factors through the
group algebra of a unipotent abelian subgroup scheme of $G_K$, where
$G_K$ denotes the group scheme over $K$ with group algebra
$KG:=kG\otimes_kK$. Let $\alpha^*\colon\mod G_K\to \mod K[t]/(t^p)$
denote restriction along $\alpha$.  We set
\[\Thick(\alpha):=\{M\in\mod G\mid \alpha^*(M\otimes_k K)\text{ is projective}\}\]
and observe that $\Thick(\alpha)$ is a tensor closed thick subcategory
of $\mod G$. 

Two $\pi$-points $\alpha$ and $\beta$ are
\emph{equivalent} if $\Thick(\alpha)=\Thick(\beta)$, and the
equivalence classes are in natural bijection with the points of
$\Proj H^*(G,k)$; see
\cite[Theorem~3.6]{Friedlander/Pevtsova:2007a}. The following theorem
makes this correspondence explicit.

\begin{theorem}\label{th:bikp-pipoint}
Let  $\fp\in\Proj H^*(G,k)$. Then
there exists a  $\pi$-point $\alpha$ such that
\[\Thick(\alpha)=\{M\in\mod G\mid \fp\not\in\supp_G(M)\}.\]
\end{theorem}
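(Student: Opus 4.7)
The plan is to pick a $\pi$-point $\alpha$ in the equivalence class assigned to $\fp$ by the Friedlander--Pevtsova correspondence \cite[Theorem~3.6]{Friedlander/Pevtsova:2007a}, and then to verify, using Theorem~\ref{th:bikp}, that $\Thick(\alpha)$ coincides with the tensor closed thick subcategory described on the right-hand side of the statement.

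First, I would recast that subcategory in a form suited to Theorem~\ref{th:bikp}. Set $\mcU_\fp:=\{\fq\in\Proj H^*(G,k)\mid \fq\not\subseteq\fp\}$. Since each $\supp_G(M)$ is closed and hence specialisation closed, the condition $\fp\not\in\supp_G(M)$ is equivalent to $\supp_G(M)\subseteq\mcU_\fp$, and a direct check shows that $\mcU_\fp$ itself is specialisation closed. Theorem~\ref{th:bikp} therefore identifies $\{M\in\mod G\mid \fp\not\in\supp_G(M)\}$ as the tensor closed thick subcategory corresponding to $\mcU_\fp$.

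It remains to show that $\Thick(\alpha)$ is also the tensor closed thick subcategory corresponding to $\mcU_\fp$, equivalently that $\supp_G(\Thick(\alpha))=\mcU_\fp$. Here I would invoke the defining property of the Friedlander--Pevtsova bijection: the $\pi$-support of $M$, namely the set of equivalence classes of $\pi$-points $\beta$ such that $\beta^*(M\otimes_k K)$ is not projective, corresponds under the bijection to the cohomological support $\supp_G(M)$. For our chosen $\alpha\leftrightarrow\fp$, this translates the condition ``$\alpha^*(M\otimes_k K)$ is projective'' into ``$\fp\not\in\supp_G(M)$'', giving the claimed equality of subcategories. The principal obstacle lies in this last translation --- the non-trivial identification of $\pi$-support with cohomological support for an arbitrary finite group scheme, which is a deep input from \cite{Friedlander/Pevtsova:2007a} (and is also repackaged in \cite{Benson/Iyengar/Krause/Pevtsova:2015b}). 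Granted that input, the present theorem reduces to an unwinding of definitions combined with Theorem~\ref{th:bikp}.
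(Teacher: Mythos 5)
Your proposal is correct, but it routes through different external input than the paper does. The paper's proof of Theorem~\ref{th:bikp-pipoint} is a bare citation of Theorem~6.1 of \cite{Benson/Iyengar/Krause/Pevtsova:2015b}, which constructs, for each $\fp$, a $\pi$-point detecting precisely the modules with $\fp\notin\supp_G(M)$. You instead derive the statement from Friedlander--Pevtsova: the homeomorphism between equivalence classes of $\pi$-points and $\Proj H^*(G,k)$ together with the identification of $\pi$-support with cohomological support for finite dimensional modules. That identification is indeed available in \cite{Friedlander/Pevtsova:2007a}, and granting it your argument closes: for $\alpha$ in the class corresponding to $\fp$, ``$\alpha^*(M\otimes_k K)$ projective'' is equivalent to ``$\fp\notin\supp_G(M)$'', which is exactly $\Thick(\alpha)=\{M\in\mod G\mid \fp\notin\supp_G(M)\}$. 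Two remarks. First, your detour through $\mcU_\fp$ and Theorem~\ref{th:bikp} is redundant: once you have the displayed equivalence, the conclusion is immediate from the definition of $\Thick(\alpha)$, so the classification of tensor closed thick subcategories is not needed (your verifications that $\supp_G(M)$ is closed and $\mcU_\fp$ is specialisation closed are correct, just unnecessary). Second, calling the support identification ``the defining property'' of the Friedlander--Pevtsova bijection is a slight misattribution --- their map is defined differently and the equality of $\pi$-support with cohomological support is a (deep) theorem --- but this does not affect the validity of the argument. In substance both proofs consist of invoking one deep result; yours makes transparent that the theorem is essentially a restatement of the $\pi$-support/cohomological support identification, while the paper's citation keeps the argument within the companion work on which the rest of the article relies.
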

\begin{proof}
 See Theorem~6.1 of \cite{Benson/Iyengar/Krause/Pevtsova:2015b}.
\end{proof}

\section{Proof of the main theorem}\label{se:proof}

This section provides the proof of the main theorem, and we begin with
some preparation.

Let $\alpha\colon K[t]/(t^p)\to KG$ be a $\pi$-point for $G$. This gives rise
to a subadditive function $\chi_\alpha$ on $\mod G$ by setting
\[\chi_\alpha:=\dim_K(\Hom_{K[t]/(t^p)}(\alpha^*(-\otimes_k K),K)).\]
We may think of $\chi_\alpha$ as composite
\[
\begin{tikzcd}
\mod G\arrow{r}{-\otimes_k K}&\mod G_K\arrow{r}{\alpha^*}&\mod
K[t]/(t^p)\arrow{r}{\chi_K}&\bbN.
\end{tikzcd}
\]

\begin{lemma}\label{le:pipoint}
We have $\Adloc(\chi_\alpha)=\Thick(\alpha)$.
\end{lemma}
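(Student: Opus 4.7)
My plan is to exploit the factorization of $\chi_\alpha$ indicated in the excerpt: set $F := \alpha^* \circ (-\otimes_k K) \colon \mod G \to \mod K[t]/(t^p)$, so that $\chi_\alpha = \chi_K \circ F$ where $\chi_K = \dim_K \Hom_{K[t]/(t^p)}(-, K)$. Since field extension is flat and restriction of scalars is exact, $F$ is exact, and for any short exact sequence $0 \to X \to Y \to Z \to 0$ in $\mod G$ additivity of $\chi_\alpha$ is equivalent to additivity of $\chi_K$ on the image in $\mod K[t]/(t^p)$. The inclusion $\Thick(\alpha) \subseteq \Adloc(\chi_\alpha)$ is then immediate: if $F(M)$ is projective, any image sequence ending at $F(M)$ splits, so $F(Y) \cong F(X) \oplus F(M)$ and additivity of $\chi_K$ on direct sums does the rest.

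The reverse inclusion $\Adloc(\chi_\alpha) \subseteq \Thick(\alpha)$ is the main step, and I will prove it by testing additivity on a single, well-chosen sequence: a projective cover $0 \to \Omega M \to P \to M \to 0$ in $\mod G$. Flatness of the $\pi$-point $\alpha$ means $KG$ is projective over $K[t]/(t^p)$ via $\alpha$, so $\alpha^*$ preserves projectives; since $-\otimes_k K$ does as well, $F(P)$ is projective in $\mod K[t]/(t^p)$. Applying $\Hom_{K[t]/(t^p)}(-, K)$ to the image sequence and using $\Ext^1(F(P), K) = 0$ yields a four-term exact sequence
\[0 \to \Hom(F(M), K) \to \Hom(F(P), K) \to \Hom(F(\Omega M), K) \to \Ext^1(F(M), K) \to 0.\]
A $K$-dimension count shows that additivity of $\chi_\alpha$ on the projective cover is equivalent to $\Ext^1_{K[t]/(t^p)}(F(M), K) = 0$. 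Finally, $K[t]/(t^p)$ is a self-injective local algebra with indecomposables $K[t]/(t^i)$ for $1 \leq i \leq p$, and a direct computation gives $\Ext^1(K[t]/(t^i), K) \cong K$ exactly when $i < p$; so this vanishing detects projectivity, giving $F(M)$ projective and hence $M \in \Thick(\alpha)$.

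The only point requiring care is the observation that a single projective-cover sequence already detects projectivity of $F(M)$; this is the same trick used in Lemma~\ref{le:adloc}, and it works here precisely because flatness of the $\pi$-point ensures $F(P)$ is projective in $\mod K[t]/(t^p)$, killing the $\Ext^1(F(P), K)$ term and turning numerical additivity into the desired projectivity statement.
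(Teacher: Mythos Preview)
Your proof is correct and follows essentially the same route as the paper: both directions rely on exactness of $F=\alpha^*(-\otimes_k K)$, and for the reverse inclusion both test additivity on a single short exact sequence with projective middle term (using flatness of $\alpha$ to ensure $F$ preserves projectives) and conclude that $F(M)$ is projective over $K[t]/(t^p)$. Your version is more explicit---spelling out the four-term $\Ext$ sequence and the classification of indecomposables over $K[t]/(t^p)$---whereas the paper simply asserts that additivity forces the image sequence to split and hence $F(Z)$ is projective.
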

\begin{proof}
  Let $Z\in\mod G$. If $\alpha^*(Z\otimes_k K)$ is a projective
  $K[t]/(t^p)$-module, then for any exact sequence
  $0\to X\to Y\to Z\to 0$ in $\mod G$ we have
 \[\chi_\alpha(X) -\chi_\alpha(Y) + \chi_\alpha(Z)=0.\]
 For the converse, choose an exact sequence $0\to X\to Y\to Z\to 0$ in
 $\mod G$ with $Y$ projective. Thus $\alpha^*(Y\otimes_k K)$ is
 projective. If
\[\chi_\alpha(X) -\chi_\alpha(Y) + \chi_\alpha(Z)=0,\]
then the sequence 
\[0\to \alpha^*(X\otimes_k K)\to \alpha^*(Y\otimes_k K)\to
\alpha^*(Z\otimes_k K)\to 0\]
splits, and therefore $\alpha^*(Z\otimes_k K)$ is projective.
\end{proof}

\begin{lemma}\label{le:top}
Let $X$ be a topological space which is a $T_0$ space. Fix a set
$P$ of closed subsets of $X$ that contains the closure
$\overline{\{x\}}$ for each $x\in X$, and view $P$ as a poset via the
inclusion order. Then the assignment $x\mapsto \overline{\{x\}}$
identifies $X$ with the set of join irreducible elements of $P$.
\end{lemma}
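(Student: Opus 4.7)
The plan is to verify three properties of the assignment $x\mapsto\overline{\{x\}}$: injectivity, surjectivity onto the set of join irreducible elements of $P$, and the fact that each $\overline{\{x\}}$ is itself join irreducible in $P$. Injectivity is immediate from the $T_0$ axiom, since in a $T_0$ space distinct points are separated by some open set and hence have distinct closures.

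For surjectivity, given a join irreducible element $C\in P$, I would use that $C$ is closed to write $C=\bigcup_{c\in C}\overline{\{c\}}$; each $\overline{\{c\}}$ lies in $P$ by hypothesis. I would then check that $C$ is the supremum in $P$ of the family $\{\overline{\{c\}}:c\in C\}$: clearly $C$ is an upper bound, and any other upper bound $D\in P$ must contain every $\overline{\{c\}}$ and hence $\bigcup_c\overline{\{c\}}=C$. Since $C$ is join irreducible and coincides with this supremum, some $\overline{\{c\}}$ must already equal $C$, producing $c\in X$ with $C=\overline{\{c\}}$ in the image of the map.

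The join irreducibility of each $\overline{\{x\}}$ is the main obstacle. Suppose for contradiction $\overline{\{x\}}=\sup_P S$ with every $C\in S$ strictly smaller than $\overline{\{x\}}$. Then each such $C$ is a proper closed subset of $\overline{\{x\}}$, so $x\notin C$, and the $T_0$ axiom provides, for each $c\in C$, an open neighbourhood of $x$ avoiding $c$. The sup condition forces any upper bound of $S$ in $P$ to contain $\overline{\{x\}}$, and in particular to contain $x$; so the plan is to exhibit an upper bound of $S$ inside $P$ that does not contain $x$, yielding the desired contradiction. The natural candidate is the topological closure of $\bigcup_{C\in S}C$ in $X$, which is a closed subset of $\overline{\{x\}}$, and the crux is to show this closure lies in $P$ and is a strict subset of $\overline{\{x\}}$ because $x$ is not a limit point of $\bigcup_{C\in S}C$. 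Both facts rest on the $T_0$ hypothesis together with the assumption that $P$ contains every $\overline{\{y\}}$: the $T_0$ separation localizes to open neighbourhoods of $x$ missing each $c$ simultaneously, and the closure hypothesis on $P$ keeps the argument inside $P$. Once these are verified the contradiction is immediate and the lemma follows.
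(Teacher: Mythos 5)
Your first two steps are fine and are surely the intended ``straightforward'' arguments: the $T_0$ axiom gives injectivity, and for surjectivity the observation that a closed set $C\in P$ satisfies $C=\sup_P\{\overline{\{c\}}\mid c\in C\}$ (any upper bound in $P$ contains each $\overline{\{c\}}$, hence their union, which is $C$) correctly forces a join irreducible $C$ to equal some $\overline{\{c\}}$.

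The gap is in the third step, and it is genuine rather than a routine verification. Neither of your two ``crux'' claims follows from the hypotheses. First, there is no reason why $\overline{\bigcup_{C\in S}C}$ should lie in $P$: the lemma only assumes that $P$ is \emph{some} collection of closed sets containing the point closures, with no stability under unions or topological closure. Second, and more seriously, the claim $x\notin\overline{\bigcup_{C\in S}C}$ is false in general. What you actually have is $x\notin\overline{\{c\}}$ for each $c\in\bigcup S$ (this follows from $x\notin C$ for the closed set $C\ni c$, and needs no appeal to $T_0$); this gives, for each $c$ separately, a neighbourhood of $x$ missing $c$, but there need be no single neighbourhood of $x$ missing all of $\bigcup S$, i.e.\ $x$ can be a limit point of the union while lying in no member of $S$. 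Concretely, take $X=\Spec\mathbb{Z}$ with the Zariski topology (a $T_0$ space), $x=(0)$ the generic point, and let $S$ consist of the point closures $\overline{\{(p)\}}=\{(p)\}$ of the closed points: each lies in $P$ by hypothesis and is strictly contained in $\overline{\{x\}}=X$, yet $\bigcup S$ is dense, so $\overline{\bigcup S}=X\ni x$. In fact, if $P$ is the set of all closed subsets, the only upper bound of $S$ in $P$ is $X$ itself, so $\overline{\{x\}}$ \emph{is} the supremum of strictly smaller elements under the paper's (infinitary) notion of join irreducibility. So the contradiction you aim for cannot be reached by this route: join irreducibility of the point closures does not follow from ``$P$ contains all point closures'' alone, and any correct argument must use finer information about $X$ and $P$ (for instance, the existence in $P$ of an element containing $\overline{\{x\}}\setminus\{x\}$ but not $x$ --- exactly what your candidate was meant to be, but which the stated hypotheses do not supply), or else a finitary reading of ``join irreducible'', which would in turn break the surjectivity argument as you and the paper use arbitrary joins there.
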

\begin{proof}
Straightforward.
\end{proof}

\begin{proof}[Proof of Theorem~\ref{th:main}]
  We consider $\Proj H^*(G,k)$ with the Hochster dual of the Zariski
  topology. Thus the open subsets are precisely the specialisation
  closed subsets \cite{Hochster:1969a}.  The assignment
\[\chi\longmapsto \Proj H^*(G,k)\setminus\supp_G(\Adloc(\chi))\]
identifies the equivalence classes of tensor closed subadditive
functions $\mod G\to\bbN$ with certain closed subsets of
$\Proj H^*(G,k)$. This follows from Corollary~\ref{co:adloc} and
Theorem~\ref{th:bikp}. On the other hand,
Theorem~\ref{th:bikp-pipoint} provides for 
each $\fp\in\Proj H^*(G,k)$ a $\pi$-point $\alpha$ such that
$\chi_\alpha$ is a tensor closed subadditive function
satisfying
\[\Adloc(\chi_\alpha)=\{M\in\mod G\mid
\fp\not\in\supp_G(M)\}\]
by Lemma~\ref{le:pipoint}.  Thus the function $\chi_\alpha$ is sent to
the closure $\overline{\{\fp\}}=\{\fq\mid\fq\subseteq\fp\}$. Moreover,
given a
$\pi$-point $\beta$ corresponding to $\fq\in\Proj H^*(G,k)$ we have
\[\chi_\alpha\ge
\chi_\beta\quad\Longleftrightarrow\quad \Adloc(\chi_\alpha)\subseteq
\Adloc(\chi_\beta)\quad \Longleftrightarrow\quad\fp\supseteq\fq.\]
Now the assertion of the theorem follows from Lemma~\ref{le:top}.
\end{proof}

\section{$\pi$-points and point modules}\label{se:point}

Let $G$ be a finite group scheme over a field $k$ of characteristic
$p>0$.  To each $\pi$-point $\alpha\colon K[t]/(t^p)\to KG$
corresponds a \emph{point
  module} \[\Delta_G(\alpha):=\res^K_k(\Hom_{K[t]/(t^p)}(KG,K)).\]
This is an endofinite $G$-module and plays a prominent role in recent
work with Iyengar and Pevtsova
\cite{Benson/Iyengar/Krause/Pevtsova:2015c}.

\begin{lemma}\label{le:chi-pipoint}
We have  $\chi_{\Delta_G(\alpha)}=\chi_\alpha$. 
\end{lemma}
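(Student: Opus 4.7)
The plan is to prove that the two subadditive functions agree pointwise by rewriting $\Hom_G(X,\Delta_G(\alpha))$ via adjunctions to match the defining expression for $\chi_\alpha(X)$. First I would use the extension-restriction of scalars adjunction for the field extension $K/k$, giving
\[\Hom_G(X,\res^K_k N)\cong\Hom_{G_K}(X\otimes_k K,N)\]
for any $G_K$-module $N$. Applied with $N=\Hom_{K[t]/(t^p)}(KG,K)$, this expresses the Hom space as one over $KG$. I would then apply the coinduction adjunction associated with the $\pi$-point $\alpha\colon K[t]/(t^p)\to KG$, namely
\[\Hom_{G_K}(Y,\Hom_{K[t]/(t^p)}(KG,K))\cong\Hom_{K[t]/(t^p)}(\alpha^*(Y),K),\]
to arrive at a natural isomorphism
\[\Hom_G(X,\Delta_G(\alpha))\cong\Hom_{K[t]/(t^p)}(\alpha^*(X\otimes_k K),K).\]

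Next I would observe that this isomorphism is compatible with natural $K$-vector space structures on both sides. On the right the $K$-action is manifest; on the left it arises because $\Delta_G(\alpha)$ is restricted from a $G_K$-module, producing a ring map $K\to\End_G(\Delta_G(\alpha))$ that commutes with the $G$-action. Consequently the right-hand side has $K$-dimension $\chi_\alpha(X)$ by definition, and the displayed isomorphism is moreover one of modules over $\End_G(\Delta_G(\alpha))$, with the $K$-action factoring through this ring map.

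The main obstacle is showing that the length of $\Hom_G(X,\Delta_G(\alpha))$ as an $\End_G(\Delta_G(\alpha))$-module coincides with its $K$-dimension. To handle this I would compute the endomorphism ring by applying the same pair of adjunctions with $X=\Delta_G(\alpha)$, obtaining
\[\End_G(\Delta_G(\alpha))\cong\Hom_{K[t]/(t^p)}(\alpha^*(\Delta_G(\alpha)\otimes_k K),K),\]
and analyse this to conclude that $\End_G(\Delta_G(\alpha))$ is a local $K$-algebra with residue field $K$. Granted this, every simple $\End_G(\Delta_G(\alpha))$-module is one-dimensional over $K$, so composition length over $\End_G(\Delta_G(\alpha))$ equals $K$-dimension on any finite-$K$-dimensional $\End_G(\Delta_G(\alpha))$-module, yielding $\chi_{\Delta_G(\alpha)}(X)=\chi_\alpha(X)$.
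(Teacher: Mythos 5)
Your opening steps coincide exactly with the paper's proof: the extension-of-scalars adjunction for $K/k$ followed by the coinduction adjunction for $\alpha$ give the natural isomorphism $\Hom_G(X,\Delta_G(\alpha))\cong\Hom_{K[t]/(t^p)}(\alpha^*(X\otimes_k K),K)$, and the observation that scalars from $K$ act on $\Delta_G(\alpha)$ by $G$-endomorphisms is also implicit there. The gap is in your final step. You reduce the comparison of length and $K$-dimension to the claim that $\End_G(\Delta_G(\alpha))$ is a local $K$-algebra with residue field $K$, but you offer no argument (``analyse this to conclude'' is a placeholder), and the claim is false in general. Take $G=\mathbb{Z}/p$, so $kG=k[x]/(x^p)$, and let $\alpha\colon K[t]/(t^p)\to KG$ send $t\mapsto x$ for a nontrivial finite extension $K/k$; this is a legitimate $\pi$-point. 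Here $\Hom_{K[t]/(t^p)}(KG,K)$ is the trivial $KG$-module $K$, so $\Delta_G(\alpha)=\res^K_k(K)$ is a direct sum of $[K:k]$ copies of the trivial $kG$-module and $\End_G(\Delta_G(\alpha))\cong\End_k(K)$ is a full matrix algebra over $k$: it is not local, and $K$ is not even central in it. (The lemma is still true in this example: $\Hom_G(M,\Delta_G(\alpha))$ is semisimple of length $\dim_k(M/xM)$ over this matrix ring, which equals the $K$-dimension of the right-hand side --- but that is not reached by your route.) More generally $\Delta_G(\alpha)$ need not be indecomposable, since enlarging the field of definition of a $\pi$-point typically makes the point module decompose, so no localness or indecomposability statement can carry the proof.

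What is actually needed is weaker, and it is what the paper uses: the natural isomorphism is compatible with the actions of the two endomorphism rings, $\End_G(\Delta_G(\alpha))$ on the left and $\End_{K[t]/(t^p)}(K)=K$ on the right, in such a way that it ``restricts to submodules'' over these rings; hence $\ell_{\End_G(\Delta_G(\alpha))}\bigl(\Hom_G(X,\Delta_G(\alpha))\bigr)=\ell_K\bigl(\Hom_{K[t]/(t^p)}(\alpha^*(X\otimes_k K),K)\bigr)=\chi_\alpha(X)$, with no structural hypothesis on $\End_G(\Delta_G(\alpha))$ at all. Your $K$-linearity remark only yields the inequality $\ell_{\End_G(\Delta_G(\alpha))}\le\dim_K$; the missing content is the reverse inequality, i.e.\ that every $\End_G(\Delta_G(\alpha))$-composition factor of $\Hom_G(X,\Delta_G(\alpha))$ is one-dimensional over $K$ (note that a general $G$-endomorphism of $\Delta_G(\alpha)$ is only $K$-semilinear, so this requires care). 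To repair your argument you would have to prove this compatibility of the isomorphism with the endomorphism-ring actions directly --- in effect, that passing from the $K[t]/(t^p)$-module $K$ to the restricted coinduced module does not change the relevant composition lengths --- which is precisely the substance of the paper's one-line proof, rather than any computation of $\End_G(\Delta_G(\alpha))$ itself.
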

\begin{proof}
Adjunction gives for each $M\in\mod G$ a natural isomorphism
\begin{align*}
\Hom_{G}(M,\Delta_G(\alpha))&\cong \Hom_{G_K}(M\otimes_k
                               K,\Hom_{K[t]/(t^p)}(KG,K))\\
&\cong\Hom_{K[t]/(t^p)}(\alpha^*(M\otimes_kK),K)
\end{align*}
which restricts to submodules over the endomorphisms rings
of $\Delta_G(\alpha)$ and $K$ respectively. 
\end{proof}

\begin{corollary}\label{co:main}
  Let $G$ be a finite group scheme over a field $k$. Given
  $\pi$-points $\alpha$ and $\beta$ of $G$, the following conditions
  are equivalent:
\begin{enumerate}[\quad\rm(1)]
\item The $\pi$-points $\alpha$ and $\beta$ are equivalent.
\item The subadditive functions $\chi_{\Delta_G(\alpha)}$ and
  $\chi_{\Delta_G(\beta)}$ are equivalent.
\end{enumerate}
\end{corollary}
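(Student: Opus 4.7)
The plan is to chain together the two identifications provided by Lemmas~\ref{le:pipoint} and \ref{le:chi-pipoint}, which between them already translate both sides of the corollary into statements about the same additive locus. There is essentially no obstacle to overcome; the work has been done by setting up the right intermediate object, the subadditive function $\chi_\alpha$.

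First I would unwind the definitions on each side. On the $\pi$-point side, equivalence of $\alpha$ and $\beta$ means $\Thick(\alpha)=\Thick(\beta)$ by definition. On the subadditive function side, equivalence of $\chi_{\Delta_G(\alpha)}$ and $\chi_{\Delta_G(\beta)}$ means $\Adloc(\chi_{\Delta_G(\alpha)})=\Adloc(\chi_{\Delta_G(\beta)})$.

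Next I would apply Lemma~\ref{le:chi-pipoint} to rewrite $\chi_{\Delta_G(\alpha)}=\chi_\alpha$ and $\chi_{\Delta_G(\beta)}=\chi_\beta$, so that the second equality becomes $\Adloc(\chi_\alpha)=\Adloc(\chi_\beta)$. Then Lemma~\ref{le:pipoint} identifies $\Adloc(\chi_\alpha)=\Thick(\alpha)$ and $\Adloc(\chi_\beta)=\Thick(\beta)$, so this equality is just $\Thick(\alpha)=\Thick(\beta)$. Reading the chain of equivalences in either direction gives the stated biconditional, completing the proof.
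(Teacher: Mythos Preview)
Your proof is correct and follows exactly the same approach as the paper, which simply says to combine Lemmas~\ref{le:pipoint} and \ref{le:chi-pipoint}. You have merely spelled out in detail the chain of equalities $\Adloc(\chi_{\Delta_G(\alpha)})=\Adloc(\chi_\alpha)=\Thick(\alpha)$ that these two lemmas provide.
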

\begin{proof}
Combine Lemmas~\ref{le:pipoint} and \ref{le:chi-pipoint}.
\end{proof}

\end{document}